\newcommand{\A}{\mathbb{A}}
\newcommand{\RR}{\mathbb{R}^2}
\newcommand{\R}{\mathbb{R}}
\newcommand{\TT}{\mathbb{T}^2}
\newcommand{\TTT}{\mathbb{T}^3}
\newcommand{\ZZ}{\mathbb{Z}^2}
\newcommand{\Z}{\mathbb{Z}}
\newcommand{\cC}{\mathcal{C}}
\newcommand{\cF}{\mathcal{F}}
\newcommand{\tf}{\tilde{f}}
\newcommand{\wt}{\widetilde}
\DeclareMathOperator{\area}{area}
\DeclareMathOperator{\length}{length}
\newtheorem{defi}{Definition}[section]
\newtheorem*{thm*}{Theorem}
\newtheorem{lemma}[defi]{Lemma}
\newtheorem{rmk}[defi]{Remark}
\newtheorem{theorem}{Theorem}
\newtheorem{claim}{Claim}
\newtheorem{cor-thm}[defi]{Corollary}
\newtheorem*{sc-lemma}{Semicontinuity Lemma}
\newtheorem*{thm1'}{Theorem 1'}
\begin{document}

\title[Absolutely PH surface endomorphisms are dynamically coherent]{Absolutely partially hyperbolic surface endomorphisms are dynamically
coherent}
\author[M. Andersson]{M. Andersson} 
\address{Instituto de Matem\'{a}tica Aplicada. Universidade Federal Fluminense. Rua Professor Marcos Waldemar de Freitas Reis, S/N. 24210-201 Niter\'{o}i, Brazil.}

\author[W. Ranter]{W. Ranter}
\address{Wagner Ranter: Instituto de Matem\'{a}tica. Universidade Federal de Alagoas, Campus A.S. Simoes S/N, 57072-090. Macei\'o, Alagoas, Brazil.}

\email{wagnerranter@im.ufal.br}

\thanks{$^{**}$W.R. was supported by CNPq/MCTI/FNDCT project: 420353/2025-9, and by CNPq, project: 446515/2024-8, Brazil.}
\urladdr{im.ufal.br/professor/wranter}


\date{\today}

\begin{abstract}
	We show that if an endomorphism $f:\TT \to \TT$ is absolutely partially
	hyperbolic, then it has a center foliation. Moreover, the center
	foliation is leaf conjugate to that of its linearization.
\end{abstract}

\maketitle


\section{Introduction and statement of results}

Throughout the last few decades, a central theme in dynamical systems has been
to find suitable generalizations of the well-understood world of
uniformly hyperbolic systems. One major field of study that was born from this
approach is that of \emph{partial hyperbolicity}.

In its original form, a partially hyperbolic system is one which, on the
infinitesimal level, can be decomposed into three (possibly multidimensional)
\emph{directions} ---  an \emph{unstable} one in which there is uniform
expansion, a \emph{stable} one in which there is uniform contraction, and an
intermediate \emph{center} direction which never expands as much as the unstable
one, nor contracts as much as the stable. When making this notion precise, a
seemingly innocent difference in the definition considered by different authors
crept into the literature.

For some authors (starting with \cite{BP}), the gap between the greatest expansion in the center direction
and the weakest expansion in the unstable direction is \emph{absolute}, meaning
that one can compare the expansion rate at different points in the phase space
and still see the gap. For
others, (starting with \cite{HPS}), such a gap is only \emph{pointwise}, meaning
that, at some points, the
expansion in the center direction may be larger than the expansion in the
unstable direction \emph{at another point}. Similarly for the strongest
contraction in the center direction vs. the weakest contraction in the stable
one. The first type of partial hyperbolicity later came to be known as
\emph{absolute} partial hyperbolicity, while the latter form came to be known as
\emph{pointwise} partial hyperbolicity.

Although pointwise partial hyperbolicity is the more general of the two, for a
long time it was assumed that the two notions were essentially equivalent, in
the sense that anything (of interest) that could be proved in the absolute
setting would also be true using the  pointwise definition. This, however,
changed in the 2010's. First it was proved 
in \cite{BBI} that absolutely partially hyperbolic diffeomorphisms on $\TTT$ are
\textit{dynamically coherent}, meaning that combining the center direction with either of
the uniform ones yields an integrable plane distribution. (In particular, the
central direction integrates to a foliation of curves.) This result was further
strengthened 
in \cite{H}, who showed that such systems are \emph{leaf conjugate} to their linearizations.
This means that there is a homeomorphism sending the leaves of all aforementioned foliations to the
corresponding invariant foliations of the unique linear automorphism to which the
system is homotopic. On the other hand --- and here comes the surprise --- it
was shown a few years later 
in \cite{RHRHU} that there are \emph{dynamically incoherent} pointwise 
partially hyperbolic diffeomorphism on $\TTT$.

The last few years has seen a burgeoning interest in \emph{non-invertible} dynamical systems,
often referred to as \emph{endomorphisms}. More specifically, in this work, by
\emph{endomorphism} we mean a $C^1$-local diffeomorphisms on $\TT$ of topological
degree strictly larger than one. In this context, the notion of partial hyperbolicity must be adapted.

A commonly used definition is the existence of a continuous invariant cone field
in the tangent bundle in which vectors are uniformly expanded by the derivative.
More precisely,
an endomorphism $f:\mathbb{T}^2 \to \mathbb{T}^2$ is said to be 
\textit{partially hyperbolic} if it admits a $Df$-invariant
\textit{unstable cone field $\mathcal{C}^u$}. That is, there exist a constant
$\lambda>1$ and a cone family $\mathcal{C}^u$, which consists of closed cone
$\mathcal{C}^u_p$ in $T_p\mathbb{T}^2$, such that 
\begin{align}\label{domination}
	Df(v) \in int(\mathcal{C}^u_{f(p)}) \quad \text{and}
	\quad	\|Df(v)\|\geq \lambda\|v\|,
\end{align}
for every $p \in \mathbb{T}^2$ and every nonzero vector $v \in \mathcal{C}^u_p$.
The existence of an unstable cone field implies the existence of a
$Df$-invariant \emph{center bundle} defined by 
\begin{align*}
	E^c(p)=\{v \in T_p\mathbb{T}^2| Df^n(v) \notin \mathcal{C}^u_{f^n(p)},
	\,\forall n\geq 0\}.
\end{align*}
Moreover, the unstable cone field $\mathcal{C}^u$ \textit{dominates} the
center bundle $E^c$. That is, there is $0<\delta<1$ such that for every $p \in
\mathbb{T}^2$, every unit vectors $v \in \cC_p^u$ and $w \in E_p^c$ we have
\begin{equation} \label{eq:domination}
	\| Df(w) \| \leq \delta \|Df(v)\|.
\end{equation}

Even here one may distinguish between an absolute and a pointwise version of the
definition. \emph{Pointwise} partial hyperbolicity means that
\eqref{eq:domination} holds whenever $v$ and $w$ are
vectors over the same base point. \emph{Absolute } partial hyperbolicity, on
the other hand,
means that \eqref{eq:domination} holds for all unit vectors $v \in \cC_p^u$ and
$w \in E_q^c$ where $p$ and $q$ can be any points in $\TT$.

Our aim in this short note is to complete the analogy with the case of partially hyperbolic diffeomorphisms on $\TTT$ by proving that:

\begin{theorem}\label{main}
	Every absolutely partially hyperbolic endomorphism on $\TT$ is
	dynamically coherent and leaf conjugate to its linearization.
\end{theorem}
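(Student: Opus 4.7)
I would first lift $f$ to $\tf:\R^2 \to \R^2$ and write $\tf = A + u$, where $A\in M_{2\times 2}(\Z)$ is the matrix induced by $f$ on $\pi_1(\TT) = \Z^2$ and $u:\R^2\to\R^2$ is bounded and $\Z^2$-periodic. Partial hyperbolicity forces $A$ to have two distinct real eigenvalues $|\lambda^u|>|\lambda^c|$ with $|\lambda^u|>1$ (complex conjugate eigenvalues would rotate and would be incompatible with an invariant expanded cone field on the lift), whose eigenlines $E^u_A$ and $E^c_A$ determine the linear unstable and center foliations on $\R^2$. The unstable cone field $\cC^u$ integrates, in the standard way for one-dimensional invariant expanded cone fields on a surface, to an $f$-invariant foliation $\cF^u$ on $\TT$ which lifts to $\tcF^u$ on $\R^2$.

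\textbf{Quasi-isometry from the absolute gap.} The main geometric input, and the step where the \emph{absolute} hypothesis plays its decisive role, is to show that $\tcF^u$ is quasi-isometric in $\R^2$: there exists $C>0$ such that every unstable arc $\tgamma$ of length $L$ has endpoints at Euclidean distance at least $L/C - C$, equivalently, every unstable leaf is a global graph over $E^u_A$ with uniformly bounded slope relative to $E^c_A$. The pointwise condition is compatible with unstable leaves coiling on themselves, while the absolute gap --- which compares unstable expansion and center growth at \emph{different} base points --- rules this out by allowing one to transport length estimates between orbits. I would adapt the Brin--Burago--Ivanov strategy to the non-invertible surface setting, replacing pull-back arguments by carefully chosen local inverse branches of $\tf$ and by using the equivariance $\tf = A + u$ to compare unstable arcs with their images under $A$.

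\textbf{Center foliation via semiconjugacy.} With quasi-isometry in hand I would produce a continuous surjection $h:\TT\to\TT$ with $h\circ f = f_A\circ h$, lifting to $H:\R^2\to\R^2$ satisfying $H\circ\tf = A\circ H$ and $H - \mathrm{Id}$ bounded (either by a Franks/Aoki--Hiraide-type argument adapted to this setting, or by an explicit construction that collapses unstable leaves). Candidate center leaves are then defined as the connected components of the preimages $H^{-1}(\ell)$ as $\ell$ ranges over the affine lines of $\R^2$ parallel to $E^c_A$. Combining quasi-isometry of $\tcF^u$ with the bounded displacement of $H$, each such preimage is a properly embedded one-dimensional curve (not a two-dimensional region), since unstable leaves cross each strip $H^{-1}(\text{neighborhood of }\ell)$ with uniformly bounded transverse diameter. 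A pointwise argument using the definition of $E^c$ and the uniform angle separation between $E^c$ and $\cC^u$ then shows these curves are tangent to $E^c$ at every point, so the resulting foliation $\tcF^c$ descends to the desired center foliation $\cF^c$ on $\TT$, giving dynamical coherence.

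\textbf{Leaf conjugacy and main obstacle.} Leaf conjugacy then follows by a standard holonomy construction: sliding along leaves of $\cF^u$ (or reading off from $H$ itself, after adjusting within center leaves) gives a homeomorphism from the leaf space of $\cF^c$ onto the leaf space of the linear center foliation of $A$, and quasi-isometry is precisely what makes the holonomies globally well defined. I expect the principal obstacle to be the quasi-isometry step and its companion in the coherence argument: unlike the $\TTT$-diffeomorphism case, here one has only a single invariant foliation at one's disposal and the endomorphism $\tf$ has no global inverse, so all the rigidity must be extracted from the absolute gap alone --- first to rule out coiling of unstable leaves in $\R^2$, then to force each $H^{-1}(\ell)$ to be a single curve rather than a thick region.
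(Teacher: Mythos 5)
Your proposal has a fatal gap at its very first step, before the absolute hypothesis is ever used. For a non-invertible partially hyperbolic endomorphism there is in general \emph{no} $Df$-invariant unstable bundle on $\TT$: the expanding direction at a point depends on the choice of backward orbit, so it lives on the inverse limit, not on the torus. Consequently there is no $f$-invariant unstable foliation $\cF^u$ to integrate from the cone field, and your whole scheme (quasi-isometry of $\tcF^u$, holonomy along unstable leaves, unstable leaves crossing the strips $H^{-1}(\text{neighborhood of }\ell)$) has no object to apply to. The existence of such a bundle is exactly the ``special'' hypothesis under which He--Shi--Wang already proved coherence in the absolute case; the content of the theorem is precisely to dispense with it. Your spectral claim about $A$ is also false: partially hyperbolic endomorphisms need not have a linearization with two distinct real eigenvalues --- the known dynamically incoherent examples of Hall--Hammerlindl are homotopic to homotheties (repeated eigenvalues), and non-real eigenvalues are not excluded either. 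In those cases there is no line $E^c_A$, so ``center leaves as components of $H^{-1}(\ell)$ for $\ell$ parallel to $E^c_A$'' is not even well posed. Finally, the semiconjugacy $H\circ\tf = A\circ H$ with $H-\mathrm{Id}$ bounded cannot be obtained by a Franks-type fixed-point argument unless $A$ is expanding: when $A$ has an eigenvalue of modulus $\leq 1$, the contraction scheme must be run backwards in that direction, which requires an inverse that $\tf$ does not have.

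The paper's actual route is entirely different and avoids constructing any global object. It invokes the Hall--Hammerlindl criterion: if $f$ admits no periodic center annulus, then $f$ is dynamically coherent and leaf conjugate to its linearization --- and this criterion holds for \emph{all} linearizations, including homotheties and irrational or complex spectra. The absolute hypothesis is then used exactly once, in a local argument ruling out a center annulus: a boundary circle $\Gamma$ of such an annulus is tangent to $E^c$ and $f\vert_\Gamma$ has degree $|\ell|>1$ (an integer eigenvalue of $A$), so by the Mean Value Theorem some point of $\Gamma$ has center derivative at least $|\ell|$; absoluteness then forces the \emph{minimal} unstable expansion $\lambda$ over the whole torus to exceed $|\ell|$. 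A Franks-type semiconjugacy of the lifted annulus map onto $x \mapsto \ell x$ (built by a fixed-point argument that works here because $|\ell|>1$ in the fiber direction being solved for) confines $U_1(\tf^n(J))$, for $J$ an unstable arc in the lifted annulus, to a slab of area $O(|\ell|^n)$, while the length-versus-volume lemma forces this area to grow like $\lambda^n$ --- a contradiction. If you want to salvage your write-up, the realistic path is not to rebuild Brin--Burago--Ivanov for endomorphisms but to find where a periodic center annulus conflicts with the absolute gap, which is where all the rigidity actually resides.
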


Throughout this work, we say that $f$ is \textit{dynamically coherent}
when there is an invariant foliation whose leaves are tangent to the center
bundle. As usual, by foliation we mean a $C^0$ foliation with $C^1$ leaves.
This foliation is denoted by $\mathcal{W}^c_f$. We say that two partially hyperbolic
endomorphism $f$ and $g$ are \textit{leaf conjugate} if both are dynamically coherent
and there is a homoemorphism $\phi:\mathbb{T}^2 \to \mathbb{T}^2$ such that for each
leaf $\mathcal{L} \in \mathcal{W}^c_f$ one has
$$\phi(\mathcal{L}) \in \mathcal{W}^c_g \ \ \text{and} \ \ \phi(f(\mathcal{L}))=g(\phi(\mathcal{L})).$$

Recall that  every endomorphism $f:\mathbb{T}^2\to \mathbb{T}^2$ is homotopic to a
unique linear endomorphism $A:\mathbb{T}^2 \to \mathbb{T}^2$, induced by the
action of $f$ on the first homology group of $\mathbb{T}^2$. We refer to $A$ as
the \textit{linear part} or \textit{linearization} of $f$, and also denote by
$A$ its lift to the universal cover, which is represented by a non-singular
square matrix with integer entries.

We point out that, since $f$ is not invertible, the existence of an unstable
cone field does not necessarily guarantee the existence of an invariant unstable vector
bundle. When such bundle exists, we say that $f$ is a \textit{specially partially
hyperbolic endomorphism}. In \cite{HSW}, the authors establish dynamical coherence
for endomorphisms that are simultaneously absolutely and specially partially hyperbolic.
Moreover, by adapting the examples constructed in \cite{RHRHU}, they also provide
examples of dynamically incoherent partially hyperbolic endomorphisms.
All of their examples are pointwise partially hyperbolic.

Recently, a rather comprehensive classification of partially hyperbolic
endomorphisms on $\TT$ was made by Hall and Hammerlindl in a series of papers
in \cite{HH1, HH2, HH3}. In the first one, they proved that every partially
hyperbolic endomorphism with hyperbolic linearization is dynamically coherent
and leaf conjugate to its linearization. In the second, they provide a
sufficient condition for a partially hyperbolic endomorphism to be dynamically
coherent and leaf conjugate to its linearization (see Theorem~\ref{ThmHH2}
below).
In the third paper, the authors construct some examples of
partially hyperbolic endomorphisms homotopic to homothetically expanding maps
that are not dynamically coherent. 


\section{Some known results}


The proof of Theorem~\ref{main} uses the following elegant sufficient condition for dynamical coherence provided by Hall and Hammerlindl, namely:

\begin{theorem}[\cite{HH2}] \label{ThmHH2}
	Let $f:\TT \to \TT$ be partially hyperbolic. If $f$ does not admit a periodic
	center annulus, then $f$ is dynamically coherent and leaf conjugate to $A$.
\end{theorem}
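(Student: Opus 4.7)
The plan is to establish first a branch foliation tangent to $E^c$, then use the absence of a periodic center annulus to upgrade it to a genuine foliation, and finally construct the leaf conjugacy with $A$ by comparing leaves in the universal cover. Lift $f$ to $\tf:\RR\to\RR$, which lies at bounded distance from its linear part $A$. Since $E^c$ is a continuous one-dimensional distribution on a surface and is $Df$-invariant, Peano's theorem yields a family of $C^1$ center curves through every point. These curves need not be unique through a given point, but using the domination by the unstable cone field $\cC^u$, one shows that any two such curves coincide on a uniform local piece in the center direction, producing a \emph{branch foliation} $\cF^c$ in the sense of Burago--Ivanov.

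Next, analyze $\cF^c$ in the universal cover. Using domination together with the fact that $\tf$ is at bounded distance from $A$, one proves a global product structure: each lifted center leaf stays within bounded Hausdorff distance of an affine line parallel to the center eigendirection of $A$, and curves tangent to the unstable cone field escape transversely to infinity. In particular, the leaf space of $\cF^c$ in $\RR$ is homeomorphic to $\R$ and each leaf is properly embedded.

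The main step is to show that the absence of a periodic center annulus forces the branch foliation to be an honest foliation. Suppose two distinct local leaves of $\cF^c$ meet at some point. Using the cone structure and invariance, one constructs an $f^n$-invariant open region bounded by pieces of center leaves that projects to an annulus in $\TT$ --- a periodic center annulus --- contradicting the hypothesis. This is the technical heart of the argument and I expect it to be the main obstacle, since in the endomorphism setting one must track preimages carefully and work with the non-invertible lift $\tf$ in parallel with the invertible linearization $A$; branching in the preimage dynamics makes the classical diffeomorphism arguments of \cite{BBI} and \cite{H} nontrivial to transplant.

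Once $\cF^c$ is genuine and $f$-invariant, the leaf conjugacy is obtained by a standard construction: using the global product structure between $\cF^c$ and an affine transversal direction in $\RR$, define a $\Z^2$-equivariant map $\tphi:\RR\to\RR$ sending each lifted center leaf to the parallel affine center line of $A$ at the correct transversal height, and verify that $\tphi$ descends to a homeomorphism $\phi:\TT\to\TT$ intertwining the two center foliations and the induced dynamics on their leaf spaces.
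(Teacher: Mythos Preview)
This theorem is not proved in the present paper: it is quoted from Hall--Hammerlindl \cite{HH2} and used as a black box. The paper's own contribution (Theorem~\ref{main}) is to show that an \emph{absolutely} partially hyperbolic endomorphism cannot admit a periodic center annulus, and then to invoke Theorem~\ref{ThmHH2} to conclude dynamical coherence and leaf conjugacy. So there is no proof here against which to compare your proposal.

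If your intent was to sketch the argument of \cite{HH2} itself, your outline has the right architecture (build a branched center foliation, show that nontrivial branching forces a periodic center annulus, then construct the leaf conjugacy from a global product structure in the cover), but two points need care. First, you tacitly assume $A$ has a distinguished center eigendirection so that lifted center leaves shadow parallel affine lines; in the endomorphism setting $A$ may be a homothety or have a repeated eigenvalue, and dealing with these cases is a real part of the work in \cite{HH2}. Second, domination alone does not give that ``any two such curves coincide on a uniform local piece'' --- that would already be unique integrability. What one actually gets is a branched foliation in the Burago--Ivanov sense, and the substance of the argument is to show that any genuine branching can be organized into an invariant annular region with center-tangent boundary, i.e.\ a periodic center annulus.
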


Here, a \emph{periodic center annulus} is 
an open subset $\mathbb{A}\subseteq \mathbb{T}^2$, homeomorphic to the
annulus $S^1 \times (0,1)$, satisfying $f^n(\mathbb{A})=\mathbb{A}$ for
some $n\geq 1$, whose boundary consists of either one or two disjoint $C^1$
circles tangent to the center direction. 
In addition, a periodic center annulus
is minimal in the sense that there is no annulus with the same property
contained in it. 
Note that such curves are necessarily circles and, by the
Poincar\'{e}-Benedixon Theorem, they are not homotopic to a point. In other words, they represent a non-trivial element of $\pi_1(\TT)$.

\medskip

The strategy of the proof of Theorem~\ref{main} is to show that an absolutely
partially hyperbolic endomorphism on $\TT$ cannot admit a center annulus. In order to do that, we make use of the following "length vs. volume" lemma. 

\begin{lemma}[\cite{HH1}] \label{le:length_vs_volume}
	There is a $K>0$ such that if $J \subset \RR$ is a $C^1$ curve tangent
	to (the lift of the) unstable cone field, then 
	\[\area(U_{1} (J))> K \length(J),\]
	where $U_1(J)$ is the set of points in $\RR$ whose distance to $J$ are less than $1$.
	
\end{lemma}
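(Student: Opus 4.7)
I would show this by thickening $J$ transversally inside $U_1(J)$ and estimating the area of the thickening from below, exploiting the uniform transversality between $\mathcal{C}^u$ and the center bundle $E^c$.

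First, compactness of $\TT$ together with continuity of $\mathcal{C}^u$ and $E^c$ yields constants $\theta, r_0 > 0$ such that $|\sin \angle(v, w)| \ge \sin \theta$ for every $p \in \TT$ and all unit vectors $v \in \mathcal{C}^u_p$, $w \in E^c_p$, and such that on every ball $B(\tilde{p}, r_0) \subset \RR$ one can choose coordinates $(x, y)$ in which the lifted cone field $\tilde{\mathcal{C}}^u$ sits in a sector of slope $\le \cot(\theta/2)$ around the $x$-axis and $\tilde{E}^c$ makes angle at most $\theta/4$ with the $y$-axis. In such a chart every $C^1$ curve tangent to $\tilde{\mathcal{C}}^u$ is a Lipschitz graph $y = g(x)$, so for each connected component $\gamma$ of $J \cap B(\tilde{p}, r_0)$ the vertical tube of half-width $\delta < \min(r_0/10, 1)$ around $\gamma$ lies in $U_1(J)$ and has area at least $2\delta \sin(\theta/2) \cdot \length(\gamma)$ (since the $x$-length of the graph is at least $\sin(\theta/2)$ times its arc length).

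Next, I would cover $\RR$ with bounded multiplicity $M$ by $\ZZ$-translates of finitely many such lifted charts, sum the local tube lower bounds over all charts and all connected components of $J$ inside each chart, and divide by $M$ to extract $\area(U_1(J)) \ge K \length(J)$ with $K = 2\delta \sin(\theta/2)/M > 0$.

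The main obstacle lies in the potential overlap of local tubes coming from several components of $J$ inside the \emph{same} chart, which can occur when many strands of $J$ lie close together vertically. I would control this by invoking the connectedness of $J$: between two strands in a chart the curve must exit and re-enter the chart, traversing an external arc of length $\gtrsim r_0$ whose own local-graph tube contributes an additional area $\gtrsim r_0 \sin\theta$ to $U_1(J)$ in a region disjoint from the in-chart tubes. A careful double-counting then offsets the in-chart overlap against these external excursions and produces the uniform constant $K$.
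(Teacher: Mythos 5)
First, note that the paper does not prove this lemma at all: it is imported verbatim from \cite{HH1}, so the only meaningful comparison is with the argument given there. The parts of your proposal that are solid are the preliminaries: the uniform angle gap between $\cC^u$ and $E^c$ does hold (if $v \in E^c_p$ lay on $\partial\cC^u_p$, invariance would give $Df(v) \in int(\cC^u_{f(p)})$, contradicting the definition of $E^c$; compactness then yields $\theta > 0$), and consequently every unstable curve is, inside a ball of uniform radius $r_0$, a Lipschitz graph over the local cone direction, with the local tube estimate you state. The fatal gap is exactly at the point you flag as the ``main obstacle'' and then dispose of by assertion. Your double-counting scheme charges the overlap of $N$ nearly parallel strands in one chart to the $N$ ``external excursions'' joining them, claiming these contribute area in regions disjoint from the in-chart tubes. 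But nothing forces those excursions to occupy disjoint regions: a curve that repeatedly doubles back can route all of its connecting arcs through essentially the same territory, so the excursions' tubes overlap one another just as badly as the strands do, and the total area gained is the measure of the region covered, not $N \cdot r_0 \sin\theta$. A chart-by-chart bookkeeping with bounded covering multiplicity $M$ cannot see this, because the enemy is unbounded multiplicity of strands of $J$ through a \emph{single} unit-scale region, which is a global recurrence phenomenon, not a covering artifact. Ruling it out is the entire content of the lemma.

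What is genuinely needed --- and what the argument in \cite{HH1} supplies --- is a global topological input that forbids such recurrence outright, rather than an attempt to pay for it. One takes a line field uniformly transverse to the (periodic, lifted) cone field --- a smooth approximation of $E^c$ serves --- and integrates it to a foliation of $\RR$ whose leaves are properly embedded lines. Since a curve everywhere transverse to a foliation of the plane can cross each leaf at most once (the classical no-return property: two crossings of the same leaf would bound a disk forcing a tangency), the map $\Phi(x,s)$ flowing each point $x \in J$ a signed leafwise distance $s \in (-\delta,\delta)$ is \emph{injective}, with Jacobian bounded below by the uniform transversality angle. The coarea formula then gives $\area(U_1(J)) \geq c\,\delta \length(J)$ with no overlap term to control at all. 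In other words, the lemma is not a local estimate patched together by covering; it hinges on the $\ZZ$-periodicity of the cone field and planar separation, which together prevent an unstable curve from accumulating on itself. Your proposal contains no ingredient playing this role, so the ``careful double-counting'' cannot be carried out as described.
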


\begin{rmk}
	Although it is not stated explicitly in \cite{HH2}, the proof of
	Theorem~\ref{ThmHH2} actually reveals that $E^c$ is \emph{uniquely} integrable,
	meaning that every curve tangent to $E^c$ is necessarily contained in a
	leaf of $\cF^c$. Since our proof of Theorem~\ref{main} uses
	Theorem~\ref{ThmHH2}, the conclusion of Theorem~\ref{main} can be strengthened likewise.
\end{rmk}

\section{Proof of Theorem~\ref{main}}

We fix $f$ as in Theorem~\ref{main} and let $A$ be its linear part. Before starting the proof,  some preliminaries are needed.

\subsection*{Center annuli as extensions over circle maps}

As established in Theorem~\ref{ThmHH2} (\cite{HH2}), a partially hyperbolic
endomorphism may fail to be dynamically coherent and leaf-conjugate to its
linearization when it exhibits a periodic center annulus. 
So, from now on, let us suppose that $f$ is not dynamically coherent and let $\A$
be a periodic center annulus. Upon replacing $f$ by an iterate if necessary, 
it is enough to consider the case in which $\A$ and each of the connected
components of $\partial \A$ are
fixed by $f$. 

Let $\Gamma$ be one of the connected component of $\partial\A$ and let $\gamma: S^1 \to
\TT$ be an injective parametrization of $\Gamma$. Using the canonical identification of
$\pi_1(\TT)$ with $\ZZ$, the homotopy class $[\gamma]$ of $\gamma$ can be seen
as an element of $\ZZ$.  The fact that  $\Gamma$ is fixed by $f$ then translates
into saying that $[\gamma]$ is
necessarily an eigenvector of $A$. Note that this implies that 
any partially hyperbolic endomorphism whose linearization has irrational
eigenvalues cannot admit a periodic center annulus. So
from now on we assume that the  eigenvalues of $A$ are integer numbers.  
Up to a linear change of coordinates (see e.g. \cite{AR1}) we may assume that $[\gamma] = e_1$, so that 
\begin{align}\label{matrix}
	A = \left(
	\begin{matrix}
		\ell & k \\
		0    & m
	\end{matrix}
	\right)
\end{align}
for some $\ell,k,m \in \Z$.

Observe that $\ell$ is the eigenvalue associated to the eigenvector $[\gamma]$.
We now consider a self-cover  $g:S^1 \to S^1$ by setting $g(s) =
\gamma^{-1}\circ f\circ \gamma (s)$. The map $g$ is homotopic to the linear map
$S^1 \to S^1$, $s \mapsto \ell s \mod \Z$. 
In particular,  $|\ell|$
is the topological degree (i.e. the number of sheets) of $f\vert_\Gamma: \Gamma
\to \Gamma$.

Note that, although $f(\A) = \A$, we do not necessarily have $f^{-1}(\A) = \A$. However, we can state the following.

\begin{lemma}\label{annulus map}
	The restriction of $f$ to $\A$ is a self-cover whose degree (number of
	sheets) is equal to $|\ell|$.
\end{lemma}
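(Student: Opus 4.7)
The plan is to establish that $f|_\A : \A \to \A$ is a proper local diffeomorphism --- hence a finite covering map --- and then to compute its degree via the induced map on fundamental groups. Since $f$ is a $C^1$ local diffeomorphism of $\TT$ and we have assumed (after passing to an iterate) that $f(\A) = \A$ with each boundary component of $\A$ fixed, the restriction $f|_\A$ is automatically a surjective local diffeomorphism into $\A$. What remains is therefore (i) to verify properness, so that $f|_\A$ is a covering map, and (ii) to compute the number of sheets.

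For (i), I would argue as follows. Let $K \subset \A$ be compact and take a sequence $\{x_n\} \subset f^{-1}(K) \cap \A$. By compactness of $\TT$ we may pass to a subsequence converging to some $x \in \overline{\A}$; continuity gives $f(x_n) \to f(x) \in K$. If $x$ were in $\partial \A$, then $f(x) \in f(\partial \A) \subseteq \partial \A$, contradicting $f(x) \in K \subset \A$. Hence $x \in \A$, and $f^{-1}(K) \cap \A$ is closed in $\A$, thus compact. A proper surjective local diffeomorphism between connected manifolds of the same dimension is a finite covering map, so $f|_\A$ is a self-cover.

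For (ii), I would use fundamental groups. Since $\A \cong S^1 \times (0,1)$, we have $\pi_1(\A) \cong \Z$, and the degree of the self-cover equals $|(f|_\A)_*(1)|$. The inclusion $i : \A \hookrightarrow \TT$ sends the generator of $\pi_1(\A)$ to the class of a core circle of $\A$, which is freely homotopic inside $\overline{\A}$ to $\Gamma$ and therefore represents $e_1 = [\gamma] \in \pi_1(\TT)$; in particular $i_*$ is injective. By naturality, $i_* \circ (f|_\A)_* = f_* \circ i_*$, so evaluating on the generator yields $i_*((f|_\A)_*(1)) = f_*(e_1) = \ell\, e_1$. Since $i_*$ is injective, $(f|_\A)_*(1) = \ell$, and the covering has $|\ell|$ sheets.

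The main technical point I foresee is the properness argument in step (i), which relies essentially on the reduction made at the start of Section~3 that each boundary component of $\A$ is fixed by $f$. Without this, a sequence in $\A$ converging to a point of $\partial\A$ could in principle be mapped into a compact subset of $\A$, and $f|_\A$ would fail to be proper. The $\pi_1$ computation in step (ii) is then routine, once one observes that the core of $\A$ represents the eigenvector $e_1$.
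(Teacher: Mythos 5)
Your proof is correct, but it takes a genuinely different route from the paper's. The paper's argument is three lines: $f\vert_\A$ and $f\vert_{\overline{\A}}$ have the same degree, and since $f(\A)=\A$, every preimage under $f\vert_{\overline{\A}}$ of a point of $\partial\A$ lies in $\partial\A$; hence the degree can be read off over a point of $\Gamma$, where it equals $|\ell|$ by the discussion preceding the lemma (the circle map $g=\gamma^{-1}\circ f\circ\gamma$ is homotopic to $s\mapsto \ell s$). You instead (i) establish the covering property honestly, via properness of $f\vert_\A$ --- a point the paper takes entirely for granted --- and (ii) compute the degree intrinsically, through $\pi_1(\A)\cong\Z$, injectivity of $i_*$, and naturality with $f_*=A$ and $Ae_1=\ell e_1$. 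Both arguments ultimately rest on the same two facts: the boundary components are $f$-fixed (so preimages within $\overline{\A}$ of boundary points stay in the boundary --- exactly what powers your properness step and the paper's fiber count), and $e_1$ is an eigenvector of $A$ with eigenvalue $\ell$. Your version buys rigor on the self-cover claim; the paper's buys brevity by transferring the sheet count to the boundary circle. Two small glosses worth tightening: the phrase ``closed in $\A$, thus compact'' is a slip, since closedness in the non-compact $\A$ gives nothing --- but your sequential argument in fact shows directly that every sequence in $f^{-1}(K)\cap\A$ has a subsequence converging in $f^{-1}(K)\cap\A$, which is compactness, so nothing is lost; and when $\partial\A$ has a single component, $\overline{\A}$ need not be a closed annulus (it can be all of $\TT$), so ``freely homotopic inside $\overline{\A}$'' is better justified by noting that the core of $\A$ and $\Gamma$ are disjoint essential simple closed curves on the torus, hence parallel, giving $i_*(1)=\pm e_1$ as you need.
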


\begin{proof}
Denote the closure of $\A$ by $\overline{\A}$.	Clearly, $f \vert_\A$ and $f \vert_{\overline{\A}}$ have the same
	degree. Now, since $f(\A) = \A$, it follows that every pre-image of a
	point in $\partial \A$ under $f \vert_{\overline{\A}}$ must belong to $\partial \A$. Hence $f \vert_{\partial \A}$ and $f\vert_\A$ have the same degree.	
\end{proof}

Let $\pi:\mathbb{R}^2 \to
\mathbb{T}^2$ be the natural projection and let $\wt{\A}$ denote the closure
of one
connected component of $\pi^{-1}(\A)$.  Now consider a lift $\wt{f}: \RR \to \RR$ of
$f$ which preserves $\wt{\A}$.  
The restriction of $\wt{f}$ to $\wt{\A}$  satisfies
$$
\widetilde{f}\vert_{\wt{\A}}(x+1,y)=\widetilde{f}\vert_{\wt{\A}}(x,y)+(\ell,0),
\quad \forall (x,y) \in \wt{\A}.
$$

Since $\A$ is homeomorphic to the annulus $S^1\times (0,1)$ and its boundary
are circles, we have that $\wt{\A}$ is homeomorphic to $\mathbb{R}\times [0,1]$
and $\wt{f}\vert_{\wt{\A}}$ is topologically conjugate to a map $F:\R \times [0,1] \to \R \times [0,1]$ satisfying

\begin{align}\label{eq.F}
F(x+1,y)=F(x,y)+(\ell,0), \forall (x,y) \in \mathbb{R}\times [0,1].
\end{align}

Now we will see that, wnenever we have a center annulus fixed by $f$, the degree the
restriction of $f$ to the annulus must be greater than one. 
In particular, if $A$ has an eigenvalue of modulus one, 
the boundary of the center annulus represents (in homology) an eigenvector
associated to the larger eigenvalue.





\begin{lemma} \label{le:ell_greater_than_one}
The integer $\ell$ has modulus greater than one.
\end{lemma}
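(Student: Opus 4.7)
The plan is to argue by contradiction. Since $\ell$ is a nonzero integer (as $A$ is a nonsingular integer matrix), the hypothesis $|\ell| \leq 1$ forces $\ell \in \{-1, +1\}$. Passing to $f^2$ if necessary---which remains absolutely partially hyperbolic, still has $\A$ as a fixed center annulus, and whose linearization $A^2$ has corresponding eigenvalue $\ell^2 = 1$---I may assume $\ell = 1$.

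My main idea is to exploit \eqref{eq.F}, which now reads $\wt f(x+1, y) = \wt f(x, y) + (1, 0)$ on $\wt \A$, via a \emph{bounded displacement} argument. First I would check that $\wt \A$ is contained in a horizontal strip $\R \times [a, b]$ of finite width: since $[\gamma] = e_1$, any lift $\wt \gamma : \R \to \RR$ of an injective parametrization of a boundary circle satisfies $\wt \gamma(s+1) = \wt \gamma(s) + (1, 0)$, forcing $\wt \gamma(s) - (s, 0)$ to be $1$-periodic, hence bounded; and $\wt \A$ is sandwiched between one or two such lifts. Setting $D(x, y) := \wt f(x, y) - (x, y)$, the identity above makes $D$ continuous on $\wt \A$ and $1$-periodic in $x$, hence bounded, say $\|D\| \leq C$. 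Telescoping then yields $\|\wt f^n(x, y) - (x, y)\| \leq n C$ for every $n \geq 1$.

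Next I would take a short $C^1$ curve $J_0 \subset \A$ tangent to the unstable cone, of length $L_0$, and lift it to $\wt J_0 \subset \wt \A$ through some basepoint $\wt p$. Partial hyperbolicity gives $\length(\wt f^n(\wt J_0)) \geq \lambda^n L_0$, while the displacement bound confines $\wt f^n(\wt J_0)$ to the ball $B_{L_0 + nC}(\wt p)$. Its $1$-neighborhood therefore has Euclidean area at most $\pi(L_0 + nC + 1)^2$. On the other hand, Lemma~\ref{le:length_vs_volume} applied to the lifted unstable curve $\wt f^n(\wt J_0)$ gives
\[
K \lambda^n L_0 < \area(U_1(\wt f^n(\wt J_0))) \leq \pi(L_0 + nC + 1)^2,
\]
and the exponential left-hand side beats the polynomial right-hand side for $n$ sufficiently large, yielding the desired contradiction.

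The main technical point---and the place where all the topological information about $\A$ enters---is verifying that $\wt \A$ is trapped in a bounded $y$-strip, since everything afterward is routine telescoping together with Lemma~\ref{le:length_vs_volume}.
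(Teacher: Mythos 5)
Your proof is correct and follows essentially the same route as the paper: both derive a contradiction with Lemma~\ref{le:length_vs_volume} by playing the exponential growth of $\length(\wt{f}^n(J))$ for an unstable curve $J \subset \wt{\A}$ against the at-most-linear growth of its diameter, which makes $\area(U_1(\wt{f}^n(J)))$ grow only polynomially. The sole difference is that where the paper gets the linear diameter bound by observing that $f\vert_\A$ is a degree-one self-cover (hence a diffeomorphism) and citing \cite{Botelho}, you prove it directly via the bounded-displacement argument based on \eqref{eq.F} --- including the necessary passage to $f^2$ so that $\ell = 1$ and the displacement $D$ is genuinely $1$-periodic in $x$.
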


\begin{proof}
	Let $J$ be an unstable curve of finite length in $\wt{A}$.  Suppose, for
	the purpose of contradiction, that $|\ell| = 1$.  Then, since $f \vert
	\A : \A \to \A$ is a diffeomorphism, the diameter of $\tf^n(J)$ grows at
	most linearly (see e.g. \cite{Botelho}). Therefore, $U_1(\tf^n(J))$ is
	contained in a ball whose radius grows at most linearly, and so does
	the area of $U_1(\wt{f}^n(J))$. But the length of $J$ grows
	exponentially, 
	contradicting the length vs. volume lemma (Lemma~\ref{le:length_vs_volume}). 
	Therefore, $\ell$ must be greater than one.
\end{proof}

The next result states that the restriction of $f$ to the closure of $\A$ is semi-conjugate to the expanding map $x \mapsto \ell x \mod \Z$ on $S^1$. In fact, this follows from a simple modification of Frank's proof of semi-conjugacy in the class of Anosov diffeomorphisms and is probably somewhat folkloric. However, we found no reference to this fact, except for  \cite[Corollary~5]{IPRX}), where it follows as a corollary of a (much more involved) result about self-covers of the open annulus (which in general do not extend to self-covers of the closed annulus).  For completeness we include a proof using a variant of
Franks' construction.

\begin{lemma}\label{le:semi}
	Let $I = [0,1]$ and let $F:\R \times I \to \R \times I$ be a continuous map satisfying \eqref{eq.F} and some integer
	$\ell$ of modulus greater than one. Then there exists a continuous
	surjection $H:\RR \times I \to \R$ satisfying $H(x+1,y) = H(x,y)+1$
	for every $(x,y) \in \R \times I$, such that $H\circ F = \ell H$.
\end{lemma}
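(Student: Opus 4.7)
The plan is to mimic Franks' semi-conjugacy construction, adapted to the strip. Since $F$ satisfies (\ref{eq.F}), its first coordinate can be written as
\[
\pi_1 F(x,y) = \ell x + u(x,y),
\]
where $u(x,y) := \pi_1 F(x,y) - \ell x$ is continuous and satisfies $u(x+1,y) = u(x,y)$. Hence $u$ is $\Z$-periodic in $x$, so it is determined by its restriction to the compact set $[0,1] \times I$, and in particular $\|u\|_\infty < \infty$. I would look for $H$ of the form $H(x,y) = x + \Phi(x,y)$; under this ansatz, the condition $H(x+1,y) = H(x,y)+1$ is equivalent to $\Phi$ being $\Z$-periodic in $x$, while the semi-conjugacy equation $H \circ F = \ell H$ becomes the twisted cohomological equation
\[
\ell\, \Phi(x,y) - \Phi(F(x,y)) = u(x,y).
\]

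The next step is to exhibit a bounded continuous solution $\Phi$ of this equation. Since $|\ell| > 1$, the natural candidate is the explicit geometric series
\[
\Phi(x,y) := \sum_{k=0}^{\infty} \frac{u(F^k(x,y))}{\ell^{k+1}}.
\]
The bound $|u \circ F^k| \le \|u\|_\infty$ makes the series normally convergent on $\R \times I$, so $\Phi$ is continuous and bounded. A direct telescoping computation verifies the cohomological equation, and $\Z$-periodicity of $\Phi$ in $x$ follows from the $\Z$-periodicity of $u$ together with the identity $F^k(x+1,y) = F^k(x,y) + (\ell^k,0)$ (which is itself immediate by induction from (\ref{eq.F})), since $u$ is $\Z$-periodic and $\ell^k \in \Z$.

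Finally, I would define $H(x,y) := x + \Phi(x,y)$ and check the three required properties. Continuity and the translation relation $H(x+1,y) = H(x,y) + 1$ are immediate. The semi-conjugacy is a reformulation of the cohomological equation by construction. For surjectivity, boundedness of $\Phi$ implies $H(n,y) \to \pm\infty$ as $n \to \pm\infty$; since $\R \times I$ is connected and $H$ is continuous, the image is a connected subset of $\R$ containing arbitrarily large and arbitrarily small values, hence equals $\R$.

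The only delicate point is the bookkeeping for the cohomological equation and for $\Z$-periodicity of $\Phi$; everything else is automatic from $|\ell|>1$ and the compactness of $[0,1] \times I$. In particular, the ansatz $H = \mathrm{id}_x + \Phi$ reduces the entire statement to the existence of a bounded solution of a contraction-type equation, which is the standard Franks trick.
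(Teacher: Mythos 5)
Your proof is correct and is essentially the paper's argument: the paper makes the same ansatz $H = P + \varphi$ with $\varphi$ periodic in $x$ and obtains $\varphi$ as the unique fixed point of the contraction $\cF\varphi = \frac{1}{\ell}\left(P\circ F - \ell x + \varphi\circ F\right)$ on $C^0_{\Z}$, whose fixed point is exactly your geometric series $\sum_{k\geq 0} \ell^{-(k+1)}\, u\circ F^k$ (the Picard iterates from $0$ are your partial sums). Your version merely writes the fixed point explicitly, and in fact spells out the surjectivity and periodicity checks that the paper leaves implicit.
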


\begin{proof}
	Let 
	\[C_{\Z}^0 = \{\varphi \in C^0(\R \times I, \RR): \varphi(x+1,y)
		=\varphi(x,y) \ \forall (x,y) \in \R \times I\}
	\]
	and consider the operator $\cF: C_{\Z}^0 \to C_{\Z}^0$ defined by
	\[\cF \phi (x,y) = \frac{1}{\ell}\left( P \circ F(x,y)-\ell x + \varphi \circ
	F(x,y) \right),\]
	where $P: \R \times I \to \R$ is the projection to the first coordinate.
	It is straightforward to check that $\cF$ is well defined, i.e. that
	$\cF \varphi \in C_{\Z}^0$ whenever $\varphi \in C_{\Z}^0$. Moreover, if
	$\psi$ is a fixed point of $\cF$, then 
	\[\ell(x+u) = (P+u)\circ F.\]
	In other words, $H = P+u$ satisfies the conclusion of the lemma. 

	Note that 
	\[\cF \varphi - \cF \phi = \frac{1}{\ell}(\varphi- \phi) \circ F,\]
	so that, in particular,
	\[ \|\cF \varphi - \cF \phi \|_{C^0} \leq \frac{1}{|\ell|} \| \varphi -
	\phi \|_{C^0}.\]
	In other words, $\cF$ is a contraction on $C_{\Z}^0$ in the usual sup
	norm, so it has a (unique) fixed point. This completes the proof.
\end{proof}

\subsection*{Proof of Theorem~\ref{main}}
We are now able to prove the main result of this note.
 So far we have dealt with generalities. In particular, nothing said so far depends on \emph{absolute} partial hyperbolicity. 
Now we explore how the existence of a periodic center annulus in an absolutely partially hyperbolic setting would imply that the expansion rate in the unstable direction must be greater than the
degree of the restriction of $f$ to the center annulus. Recall our current
setting: We are considering an absolutely partially hyperbolic endomorphism
$f:\TT \to \TT$. We further suppose (for the purpose of arriving at a
contradiction) that $f$ \emph{fixes} a center annulus $\A$ and, moreover, that $f$ also
fixes both connected components of $\partial \A$ (in case there are more than
one). We fix one such connected component and call it $\Gamma$ and denote by
$\ell$ the (signed) degree of $f \vert \Gamma$.

\begin{claim}
There exists $c>0$ and 	$\lambda> |\ell|$ such that $\| Df^n v \| \geq c \lambda^n \| v\|$, for every non-zero $v \in \cC_x^u$ and every $n \geq 1$.
\end{claim}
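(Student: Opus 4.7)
The plan is to use the boundary circle $\Gamma$ as a ``test'' on which the center expansion of $f$ can be quantified, and then convert this into an unstable estimate via the absolute domination inequality \eqref{eq:domination}. Since $\Gamma$ is a $C^1$ circle tangent to $E^c$ and $f|_\Gamma$ is a self-cover of degree $\ell$ (with $|\ell|>1$ by Lemma~\ref{le:ell_greater_than_one}), the iterate $f^n|_\Gamma$ is a self-cover of degree $\ell^n$. Parametrizing $\Gamma$ by arclength as $c:[0,L]\to\Gamma$, the unit tangent $c'(s)$ lies in $E^c_{c(s)}$, and the path $f^n\circ c$ wraps $\Gamma$ exactly $|\ell|^n$ times, so
\begin{equation*}
  \int_0^L \|Df^n(c'(s))\|\,ds \;=\; |\ell|^n\, L.
\end{equation*}

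By the mean value inequality, there exists $s_n^\ast\in[0,L]$ with $\|Df^n(c'(s_n^\ast))\|\geq|\ell|^n$. In other words, at some point of $\Gamma$ there is a unit center vector whose $n$-th iterate has norm at least $|\ell|^n$. The second step is to transfer this center lower bound into an unstable lower bound. Iterating the absolute domination \eqref{eq:domination} along any pair of orbits (note that $E^c$ and $\cC^u$ are both $Df$-invariant, so the normalized vectors $Df^{i}v/\|Df^{i}v\|$ and $Df^{i}w/\|Df^{i}w\|$ remain in the relevant bundles at each step), one obtains
\begin{equation*}
  \|Df^n(w)\| \;\leq\; \delta^{\,n}\,\|Df^n(v)\|,
\end{equation*}
for every $n\geq 1$ and every pair of unit vectors $v\in\cC_p^u$, $w\in E_q^c$, with $p,q\in\TT$ arbitrary.

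Combining these two ingredients, for any nonzero $v\in\cC_x^u$ we apply the iterated domination with $w=c'(s_n^\ast)$ and $v$ normalized, giving
\begin{equation*}
  \|Df^n(v)\| \;\geq\; \delta^{-n}\,\|Df^n(c'(s_n^\ast))\|\cdot\|v\|
  \;\geq\; \left(\tfrac{|\ell|}{\delta}\right)^{n}\|v\|.
\end{equation*}
Thus the claim holds with $c=1$ and $\lambda=|\ell|/\delta>|\ell|$, as desired. The only delicate point is the iteration of the absolute domination inequality; this is standard but must be carried out carefully because the two orbits starting at $p$ and $q$ evolve independently, and one has to verify that the invariance of $\cC^u$ and $E^c$ under $Df$ makes the one-step inequality compose multiplicatively into the $n$-step inequality stated above.
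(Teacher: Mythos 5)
Your proposal is correct and follows essentially the same approach as the paper: both use the degree of $f|_\Gamma$ together with an arc-length/mean-value argument to produce a unit center vector on $\Gamma$ expanded at rate at least $|\ell|$, and both invoke the \emph{absolute} (cross-point) domination inequality \eqref{eq:domination} to transfer this into a lower bound $\lambda = |\ell|/\delta > |\ell|$ on expansion in the unstable cone. The only difference is bookkeeping: the paper makes the comparison at time one, obtaining the uniform bound $\lambda=\min_{p}\{\|Df(v)\|/\|v\| : v \in \cC^u_p\setminus\{0\}\} \geq |\ell|/\delta$ and then iterating via cone invariance, whereas you run the mean-value argument on $f^n$ directly and iterate the domination inequality along two independent orbits --- both routes are valid and give the same constants.
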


\begin{proof}[Proof of Claim~1]
Sine the restriction of $f$ to $\Gamma$ is a local diffeomorphism of the circle
to itself with topological degree $|\ell|$, the integral of the Jacobian of $f \vert \Gamma$ is equal to $|\ell|$ times the arc-length of $\Gamma$. 
By the Mean Value Theorem there must be some point $p$ on $\Gamma$ where the 	
Jacobian is at least $|\ell|$. Since $\Gamma$ is tangent to the center direction, every tangent vector to $\Gamma$ lies in $E^c$. In particular, the Jacobian of $f \vert_\Gamma$ at $p$ is simply $\|Df(u)\|$, where $u$ is a unit vector tangent to $\Gamma$ (hence $u \in E^c(p)$).  Therefore, the number $\lambda=\min_{p \in \mathbb{T}^2}\{\|Df(v)\|/\|v\|: v \in \mathcal{C}^u_p\backslash\{0\}\}$, which by \eqref{eq:domination} must be larger than $|\ell|$. 
\end{proof}

Next, let $\widetilde{\mathbb{A}}$ be a connected component of the pre-image of
the universal covering of $\A$, and fix a lift $\widetilde{f}: \RR \to \RR$ of
$f$ that preserves $\widetilde{\A}$. We now consider some unstable line segment
$J$ contained in $\widetilde{\mathbb{A}}$. By the claim above together with the
length vs. volume lemma (Lemma~\ref{le:length_vs_volume}), we have that
\begin{align}
	\area(U_{1} (\widetilde{f}^n(J)))> K \length(\widetilde{f}^n(J))
	\geq Kc\lambda^n \length(J).
\end{align}

Let $\widetilde{h}$ be the map given in Lemma~\ref{le:semi} such that $\wt{h}\circ \wt{f}\vert_{\wt{A}}=\ell\wt{h}$. Then, we have that $$\widetilde{h}(\widetilde{f}^n(J)) = \ell^n (\widetilde{h}(J))$$
is a horizontal line segment of length $|\ell|^n$ times the length of $\widetilde{h}(J)$. Therefore, $U_1(\widetilde{f}^n(J))$ must be contained on a rectangle of width $|\ell|^n\cdot \length(\widetilde{h}(J))+2\|\widetilde{h}-P\|+2$ and height $2\|\widetilde{h}-P\|+2$. But Lemma~\ref{le:length_vs_volume} says that the area of $U_1(\widetilde{f}^n(J))$ must be at least $C\lambda^n$ for some $C$. That is a contradiction for large $n$. We conclude that $f$ cannot have a periodic center annulus. Hence, by Theorem~\ref{ThmHH2}, $f$ is dynamically coherent and	leaf conjugate to its linearization. \qed


\bibliographystyle{alpha}
\bibliography{biblio}

\end{document}